\newtheorem{theorem}{Theorem}[section]
\newtheorem{proposition}[theorem]{Proposition}
\newtheorem{lemma}[theorem]{Lemma}
\newtheorem{corollary}[theorem]{Corollary}
\newtheorem{question}[theorem]{Question}		
\theoremstyle{definition}
\newtheorem{definition}[theorem]{Definition}
\theoremstyle{remark}
\newtheorem{remark}[theorem]{Remark}
\newcommand{\Aut}{\operatorname{Aut}}
\newcommand{\Perm}{\operatorname{Perm}}
\newcommand{\Obj}{\operatorname{Obj}}
\newcommand{\K}{{\mathbb K}}
\newcommand{\G}{\mathcal{G}}
\newcommand{\C}{\mathcal{C}}
\newcommand{\E}{{\mathcal E}}
\newcommand{\X}{{\mathcal X}}
\newcommand{\graphs}{{{\mathcal G}raphs}}
   \def\MR#1{}
\begin{document}

\title{Regular evolution algebras are universally finite}
\author{Cristina Costoya}
\address{CITIC Research Center, Ciencias de la Computaci{\'o}n y Tecnolog{\'\i}as de la Informaci{\'o}n,
Universidade da Coru{\~n}a, 15071-A Coru{\~n}a, Spain.}
\email{cristina.costoya@udc.es}
\thanks{The first author was partially supported by Mi\-nis\-te\-rio de Econom\'ia y Competitividad (Spain), grant  MTM2016-79661-P}
\author{Panagiote Ligouras}
\address{I.I.S.S.\ ``Da Vinci-Agherbino" - Via Repubblica, 36/H, 70015 Noci BA, Italy}
\email{ligouras@alice.it}
\author{Alicia Tocino}
\address{Departamento de \'Algebra, Geometr{\'\i}a y Topolog{\'\i}a, Universidad de M{\'a}laga, 29071-M{\'a}laga, Spain}
\email{alicia.tocino@uma.es}
\thanks{The third author was partially supported by by Ministerio de Econom\'ia y Competitividad (Spain) grant MTM2016-76327-C3-1-P}

\author{Antonio Viruel}
\address{Departamento de \'Algebra, Geometr{\'\i}a y Topolog{\'\i}a, Universidad de M{\'a}laga, 29071-M{\'a}laga, Spain}
\email{viruel@uma.es}
\thanks{The fourth author was partially supported by by Ministerio de Econom\'ia y Competitividad (Spain) grant MTM2016-78647-P}

\subjclass{05C25, 17A36, 17D99}
\keywords{Evolution algebra, automorphism, graph}

\begin{abstract}
In this paper we show that evolution algebras over any given field $\Bbbk$ are universally finite. In other words, given any finite group $G$, there exist infinitely many regular evolution algebras $X$ such that $\Aut(X)\cong G$. The proof is built upon the construction of a covariant faithful functor from the category of finite simple (non oriented) graphs to the category of (finite dimensional) regular evolution algebras.
\end{abstract}

\maketitle

\section{Introduction}

Given a category $\C$, we say that $\C$ is finitely universal \cite[Section 4.1]{Babai} if every finite group can be represented as the full automorphism group of an object in $\C$, that is, if given $G$ a finite group, there exists $X\in\Obj(\C)$ such that $\Aut_{\C}(X)\cong G$.

Deciding whether a given category is finitely universal is a hard problem. From the celebrated result of Frucht \cite{Frucht39}, who considered finite graphs, to the recent results in \cite{CV2,CV-advances} (resp.\ \cite{CMV1,cmv3}), where the homotopy category of spaces (resp.\ the category of differential graded algebras) is considered,  mathematicians have been addressing the identification of finitely universal categories for almost a century. The reader is encouraged to consult \cite[Introduction]{Jones} for a (non exhaustive) account of milestones in this question.

In this manuscript we consider the property of being finitely universal for evolution algebras over an arbitrary field $\Bbbk$ (see Section \ref{sec:evolution} for precise definitions). Evolution algebras are, non necessarily associative, commutative algebras that were originally introduced by Tian \cite{Tian} (see also \cite{Tian-Vojtechovsky}) as a mathematical model of self-reproduction in non-Mendelian genetics. Despite of the lack of nice structural properties, for example they are not power associative nor closed under subalgebras, evolution algebras have been a very active research topic the last decade (see \cite{Camacho-Gomez,Tian-Zou,Casas-Ladra,Rozikov-Murodov} and the references provided there).

Within this context, our main result is:

\begin{theorem}\label{thm:main}
Let $\E_\Bbbk$ be the category of evolution algebras over the field $\Bbbk$, and $G$ be a finite group. Then there are infinitely many non isomorphic $X\in\Obj(\E_\Bbbk)$ such that
\begin{enumerate}
\item $X^2=X$, that is, $X$ is regular, and
\item $\Aut_{\E_\Bbbk}(X)\cong G$.
\end{enumerate}
In particular the category $\E_\Bbbk$ is finitely universal.
\end{theorem}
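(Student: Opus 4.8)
The plan is to reduce the problem to finite graphs and then transport the answer to evolution algebras by a rigidifying construction, which will be the object part of the functor $\graphs\to\E_\Bbbk$ advertised in the abstract. First I recall the classical fact (Frucht's theorem \cite{Frucht39} and its refinements) that for every finite group $G$ there are infinitely many pairwise non-isomorphic \emph{connected} finite simple graphs $\Gamma_0$, each with at least two vertices, such that $\Aut(\Gamma_0)\cong G$. To each finite simple graph $\Gamma$ without isolated vertices I associate the evolution algebra $X_\Gamma$ with natural basis $\{e_v\}_{v\in V(\Gamma)}$, multiplication $e_ue_v=0$ for $u\neq v$, and
\[
 e_v^2 \;=\; e_v+\sum_{w\sim v} e_w ,
\]
so that the structure matrix of $X_\Gamma$ in this basis is $A(\Gamma)+I$, with $A(\Gamma)$ the adjacency matrix. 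The heart of the argument is the isomorphism $\Aut_{\E_\Bbbk}(X_\Gamma)\cong \Aut(\Gamma)$, valid whenever $A(\Gamma)+I$ is invertible over $\Bbbk$.

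The first step is to secure regularity. Since $X_\Gamma^2$ is spanned by $\{e_v^2\}_v$, one has $X_\Gamma^2=X_\Gamma$ precisely when $A(\Gamma)+I$ is invertible; as this may fail for an arbitrary graph, I replace a given $\Gamma_0$ by the graph $\Gamma$ obtained by attaching to every vertex of $\Gamma_0$ a pendant path of length two. A short block row-reduction of $A(\Gamma)+I$ gives $\det\bigl(A(\Gamma)+I\bigr)=\pm 1$, so $A(\Gamma)+I$ is invertible over \emph{every} field $\Bbbk$; hence $X_\Gamma$ is regular, of dimension $3\,|V(\Gamma_0)|$. Tracking the degree-one vertices and their neighbours shows $\Aut(\Gamma)\cong\Aut(\Gamma_0)\cong G$, and distinct $\Gamma_0$ yield graphs $\Gamma$ with distinct numbers of vertices.

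The second, and main, step is to compute $\Aut_{\E_\Bbbk}(X_\Gamma)$. The key lemma is that every $\varphi\in\Aut_{\E_\Bbbk}(X_\Gamma)$ is \emph{monomial} with respect to the natural basis, i.e.\ $\varphi(e_v)=c_v\,e_{\pi(v)}$ for some $\pi\in\operatorname{Sym}(V(\Gamma))$ and $c_v\in\Bbbk^{\times}$. Indeed, writing $\varphi(e_v)=\sum_w c_{vw}e_w$ and using $e_ue_v=0$ together with the linear independence of $\{e_w^2\}_w$ (a consequence of regularity), the identities $\varphi(e_u)\varphi(e_v)=0$ force the supports of the $\varphi(e_v)$ to be pairwise disjoint, and since $\varphi$ is bijective their supports cover $V(\Gamma)$, so each of these $|V(\Gamma)|$ pairwise disjoint nonempty supports is a singleton. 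Substituting the monomial form into $\varphi(e_v^2)=\varphi(e_v)^2$ and comparing the coefficient of $e_{\pi(v)}$ gives $c_v^2=c_v$, hence $c_v=1$; comparing the remaining coefficients gives $\pi(N_\Gamma(v))=N_\Gamma(\pi(v))$ for all $v$, i.e.\ $\pi\in\Aut(\Gamma)$. Conversely every graph automorphism visibly induces an algebra automorphism, whence $\Aut_{\E_\Bbbk}(X_\Gamma)\cong\Aut(\Gamma)$.

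Combining the two steps, applying the construction to infinitely many connected graphs $\Gamma_0$ with $\Aut(\Gamma_0)\cong G$ produces infinitely many regular evolution algebras $X_\Gamma$, pairwise non-isomorphic because their dimensions differ, each satisfying $\Aut_{\E_\Bbbk}(X_\Gamma)\cong G$; in particular $\E_\Bbbk$ is finitely universal. I expect the monomiality lemma, and especially the normalisation of the scalars $c_v$, to be the delicate point: with the more naive choice $e_v^2=\sum_{w\sim v}e_w$ (no diagonal term) the same computation only forces $c_v^{\,3}=1$, producing spurious diagonal automorphisms over any field containing a primitive cube root of unity. The diagonal term $e_v$, combined with the pendant-path modification that makes $A(\Gamma)+I$ unimodular so the argument survives over an arbitrary $\Bbbk$, is exactly what removes this obstruction.
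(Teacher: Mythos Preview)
Your argument is correct, but the construction differs from the paper's. You index the natural basis by the vertices alone, with $e_v^2=e_v+\sum_{w\sim v}e_w$, so that the structure matrix is $A(\Gamma)+I$; since this can be singular you first replace $\Gamma_0$ by the graph $\Gamma$ obtained by adjoining a pendant path of length two at every vertex, check that the resulting $A(\Gamma)+I$ is unimodular, and verify $\Aut(\Gamma)\cong\Aut(\Gamma_0)$. The paper instead takes the natural basis indexed by $V\sqcup E$, with $b_v^2=b_v$ and $b_e^2=b_e+b_u+b_w$ for $e=\{u,w\}$; the structure matrix is then upper triangular with ones on the diagonal, hence regular for \emph{every} simple graph, and no preliminary modification of the graph is needed. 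Both routes rest on the same rigidity principle---the natural basis of a regular evolution algebra is unique up to permutation and nonzero scalars, which is exactly your monomiality lemma and is quoted in the paper from Elduque--Labra---and both then pin down the scalars via the same $c_v^2=c_v$ comparison arising from the diagonal term. The paper's encoding buys a clean functor defined on all finite simple graphs and makes faithfulness on objects (hence pairwise non-isomorphism of the resulting algebras) immediate; your vertex-only encoding stays closer to the adjacency-matrix dictionary but requires the pendant-path detour and the block determinant calculation. One small inaccuracy: ``distinct $\Gamma_0$ yield graphs $\Gamma$ with distinct numbers of vertices'' is false as stated; you should instead choose the infinitely many $\Gamma_0$ to have pairwise distinct vertex counts, which the Frucht--Sabidussi results allow.
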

\begin{proof}
Given a finite group $G$, there exist infinitely many connected simple graphs $\G$ such that $\Aut_\graphs(\G)\cong G$ \cite{Frucht39} (see also \cite{deGroot} and \cite{Sabidussi}). Therefore, according to Theorem \ref{thm:funtor}, there are infinitely many regular evolution algebras $X:=\X(\G)\in\Obj(\E_\Bbbk)$ such that $\Aut_{\E_\Bbbk}(X)\cong \Aut_\graphs(\G)\cong G$.
\end{proof}

Since the automorphism group of a regular evolution algebra is always finite, \cite[Theorem 4.8]{Elduque-Labra-2015}, we obtain the following:

\begin{corollary}\label{cor:main}
Let $G$ be an abstract group. Then $G$ is finite if and only if there exists a regular evolution algebra $X$ such that $\Aut_{\E_\Bbbk}(X)\cong G$.
\end{corollary}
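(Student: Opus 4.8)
The plan is to establish the two implications of the equivalence separately; each is an immediate consequence of a result already at hand.

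First I would handle the ``only if'' direction: assuming $G$ is finite, I would invoke Theorem \ref{thm:main}, which produces a regular evolution algebra $X\in\Obj(\E_\Bbbk)$ (in fact infinitely many pairwise non-isomorphic ones, but a single one suffices here) with $\Aut_{\E_\Bbbk}(X)\cong G$. This is the substantive half of the statement, yet the work has already been carried out: through the faithful functor $\X$ of Theorem \ref{thm:funtor} it reduces to Frucht's theorem \cite{Frucht39} on realizing every finite group as the automorphism group of a finite simple graph.

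Next I would treat the ``if'' direction: assuming that some regular evolution algebra $X$ satisfies $\Aut_{\E_\Bbbk}(X)\cong G$, I would cite \cite[Theorem 4.8]{Elduque-Labra-2015}, according to which the automorphism group of any regular evolution algebra is finite; hence $G\cong\Aut_{\E_\Bbbk}(X)$ is finite. Combining the two implications yields the corollary.

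I do not anticipate a genuine obstacle here. The only point worth verifying is that ``regular'' is used in the same sense, namely $X^{2}=X$, in Theorem \ref{thm:main}\,(1) and in \cite[Theorem 4.8]{Elduque-Labra-2015}, so that the two halves refer to precisely the same class of algebras and the equivalence is not vacuous; once this is checked, the proof is nothing more than the two-line combination sketched above.
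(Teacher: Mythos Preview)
Your proposal is correct and mirrors the paper's argument exactly: the paper derives the corollary from Theorem~\ref{thm:main} for the ``only if'' direction and from \cite[Theorem 4.8]{Elduque-Labra-2015} for the ``if'' direction. There is nothing to add.
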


If $X$ is a non necessarily regular evolution algebra over a field $\Bbbk$, and $\K$ denotes the algebraic closure of $\Bbbk$,
then the group $G=\Aut_{\E_{\K}}(X\otimes_\Bbbk \K)$ is indeed an algebraic linear group defined over $\Bbbk$ such that $G(\Bbbk)=\Aut_{\E_\Bbbk}(X)$. Therefore, in the spirit of Gordeev-Popov's work \cite{MR2031860}, Theorem \ref{thm:main} may be seen as a strengthening  of \cite[Corollary 2]{MR2031860}, and it is natural to consider the following question (compare to \cite[Theorem 1]{MR2031860}):

\begin{question}\label{qu:algebraic}
Let $G$ be an algebraic group defined over $\Bbbk$. Does there exist an evolution algebra defined over $\Bbbk$, $X$, such that the algebraic group $\Aut_{\E_{\K}}(X\otimes_\Bbbk \K)$ is $\Bbbk$-isomorphic to $G$?
\end{question}

\begin{remark}
Notice that in contrast to \cite[Theorem 1]{MR2031860}, the algebras in Question \ref{qu:algebraic} are not required to be simple since the group $G$ is not assumed to be finite. Indeed, simple evolution algebras are regular \cite[Corollary 4.6]{Cabrera-Siles-Velasco}, and therefore as we have mentioned above, their automorphism groups are finite.
\end{remark}

%
%


\section{Basics on evolution algebras}\label{sec:evolution}

In this section we briefly introduce the basics on evolution algebras that we need in the forthcoming sections. While the standard reference on the subject is \cite{Tian}, our notation is closer to the more recent work \cite{Tesis-Yolanda}.

Given a field $\Bbbk$, we say that $A$ is an algebra over $\Bbbk$, or a $\Bbbk$-algebra, if $A$ is a $\Bbbk$-vector space furnished with an inner product, a $\Bbbk$-bilinear map $A\times A\to A$. An algebra morphism is then a linear map which commutes with inner products.  The algebras considered in this manuscript are all finite dimensional vector spaces unless otherwise is stated.

We now recall the definition of an evolution algebra \cite[Definition 1]{Tian}, \cite[Definitions 1.2.1]{Tesis-Yolanda}:

\begin{definition}\label{def:evolution}
An \textit{evolution algebra} over a field $\Bbbk$ is a $\Bbbk$-algebra $X$ provided with a basis $B = \{b_i\, |\, i \in \Lambda\}$ such that $b_i b_j = 0$ whenever $i\neq j$. Such a basis $B$ is called a \textit{natural basis}.
\end{definition}

Every $\Bbbk$-vector space $X$ gives rise to a degenerated evolution algebra by defining $xy=0$ for every $x,y\in X$. More generally, given a $\Bbbk$-vector space $X$ spanned by a basis $B = \{b_i\, |\, i \in \Lambda\}$, an evolution algebra with natural basis $B$ is completely determined by just giving the linear expression of $b_i^2:=b_i b_i\in X$  for every $i\in\Lambda$. This motivates the following definition \cite[Definitions 1.2.1]{Tesis-Yolanda}:

\begin{definition}\label{def:structure}
Let $X$ be an evolution algebra, and fix a natural basis $B$ in $X$. The scalars $\omega_{ki} \in\Bbbk$ such that
$b_i^2:=b_i b_i=\sum_{k\in\Lambda}\omega_{ki}b_k$ are called the \textit{structure constants} of $X$ relative to $B$, and the matrix $M_B := (\omega_{ki})$ is said to be the \textit{structure matrix} of $X$ relative to $B$.
\end{definition}

Notice that given an evolution algebra $X$, with natural basis $B$, the column vectors of $M_B$ span $X^2$, the algebra generated by twofold products of elements in $X$. Therefore, if $X=X^2$, then $M_B$ must be of maximal rank, that is, $M_B$ must be a regular matrix (see also \cite[Remark 4.2]{Elduque-Labra-2015}). This justifies the following definition:

\begin{definition}\label{def:regular}
We say that an evolution algebra $X$ is \textit{regular} if and only if $X=X^2$, or equivalently, if and only if for any natural basis $B$, the structure matrix $M_B$ is a regular matrix.
\end{definition}

The following result shows that regular evolution algebras have essentially just one natural basis, which plays a key role in our arguments.

\begin{proposition}{\rm(\cite[Theorem 4.4]{Elduque-Labra-2015}, \cite[Proposition 1]{Celorrio-Velasco})}\label{prop:unique_basis}
Let $X$ be a regular evolution algebra over $\Bbbk$ provided with a natural basis $B = \{b_i\, |\, i \in \Lambda\}$. Given any other natural basis $\hat{B} = \{\hat{b}_i\, |\, i \in \Lambda\}$, there exists a permutation $\sigma\in\Perm(\Lambda)$, and scalars $\lambda_i\in\Bbbk$ such that $\hat{b}_i=\lambda_ib_{\sigma(i)}$
\end{proposition}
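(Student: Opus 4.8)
The plan is to translate the problem into linear algebra via the change of basis matrix, and then to exploit that regularity forces the structure matrix to be invertible. Write $\hat b_i=\sum_{k\in\Lambda}p_{ki}b_k$, so that $P=(p_{ki})$ is an invertible matrix, and let $M_B=(\omega_{mk})$ be the structure matrix of $X$ relative to $B$, so $b_k^2=\sum_m\omega_{mk}b_m$. The first step is the computation: using that $B$ is a natural basis, $b_kb_l=0$ for $k\neq l$, one gets
\[
\hat b_i\hat b_j=\sum_{k,l}p_{ki}p_{lj}\,b_kb_l=\sum_k p_{ki}p_{kj}\,b_k^2=\sum_m\Bigl(\sum_k\omega_{mk}\,p_{ki}p_{kj}\Bigr)b_m .
\]

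Next I would use that $\hat B$ is \emph{also} a natural basis, so $\hat b_i\hat b_j=0$ whenever $i\neq j$. Comparing coefficients of the $b_m$ in the identity above, this says precisely that for every pair $i\neq j$ the vector with entries $p_{ki}p_{kj}$ (the entrywise product of the $i$-th and $j$-th columns of $P$) lies in the kernel of $M_B$. Since $X$ is regular, $M_B$ is a regular matrix by Definition \ref{def:regular}, hence injective, and therefore $p_{ki}p_{kj}=0$ for all $k$ and all $i\neq j$.

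Finally, this last relation says that each row of $P$ has at most one nonzero entry; as $P$ is invertible it has neither a zero row nor a zero column, so in fact each row and each column of $P$ has exactly one nonzero entry. Equivalently, $P$ is a monomial matrix: there is a permutation $\sigma\in\Perm(\Lambda)$ and nonzero scalars $\lambda_i\in\Bbbk$ with $p_{ki}=\lambda_i$ for $k=\sigma(i)$ and $p_{ki}=0$ otherwise, which is exactly the assertion $\hat b_i=\lambda_i b_{\sigma(i)}$.

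I do not expect a serious obstacle here: the whole argument rests on the displayed identity, which converts orthogonality of $\hat B$ into the statement that distinct columns of $P$ have disjoint support, together with the cancellation of $M_B$ coming from regularity. The only point demanding a little care is the passage from ``each row of $P$ has at most one nonzero entry'' to ``$P$ is monomial'', i.e.\ ruling out zero rows and columns, which uses invertibility of $P$ and is where working with finite-dimensional algebras is convenient.
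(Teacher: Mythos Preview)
Your argument is correct. Note, however, that the paper does not supply its own proof of this proposition: it is quoted from the literature (\cite[Theorem 4.4]{Elduque-Labra-2015}, \cite[Proposition 1]{Celorrio-Velasco}) and used as a black box. Your proof is essentially the standard one given in those references: compute $\hat b_i\hat b_j$ in the basis $B$, observe that the orthogonality of $\hat B$ forces the Hadamard product of distinct columns of the transition matrix $P$ to lie in $\ker M_B$, and use the invertibility of $M_B$ (regularity) to conclude that $P$ is monomial. There is nothing to add; the final step you flag---passing from ``each row has at most one nonzero entry'' to ``$P$ is monomial''---is indeed just the observation that an invertible square matrix has no zero row or column, and that two rows with their unique nonzero entry in the same column would be proportional.
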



\section{From simple graphs to evolution algebras}\label{sec:graphs}

In \cite[Definition 15]{Tian} and \cite[Definition 2.2]{Elduque-Labra-2015}, authors have related evolution algebras with weighted directed graphs. In this section we show that simple (non oriented) graphs give rise to evolution algebras in a functorial way.

Let $\graphs$ denote the category of \textit{finite simple graphs}. Objects in $\graphs$ are pairs $\G=(V,E)$ where $V$ is a finite set, the set of vertices of $\G$, and $E$ is a set of unordered pairs of different vertices (subsets of $V$ of cardinality $2$), so we shall frequently denote elements $e\in E$ as $e=\{v,w\}\subset V$. Notice that by definition, two vertices in a simple graph can be connected by at most one edge, and loops are not allowed.

Given two simple graphs $\G_i=(V_i,E_i)$, $i=1,2$, a graph morphism $f\colon \G_1\to \G_2$ is an injective set morphism $f\colon V_1\to V_2$ such that if $e=\{v,w\}\in E_1$, then $f(e)=\{f(v),f(w)\}\in E_2$.

Our main result in this section is the following:

\begin{theorem}\label{thm:funtor}
 Given any field $\Bbbk$, there exists a covariant faithful functor $$\X\colon \graphs \rightsquigarrow \E_\Bbbk$$ such that for every $\G\in\graphs$ the following hold:
\begin{enumerate}
\item $\X(\G)^2=\X(\G)$, that is, $\X(\G)$ is regular, and
\item $\Aut_{\E_\Bbbk}\big(\X(\G)\big)\cong \Aut_\graphs(\G)$.
\end{enumerate}
\end{theorem}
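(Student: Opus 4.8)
The plan is to construct the functor $\X$ explicitly by encoding the combinatorial data of a graph $\G=(V,E)$ into the structure matrix of an evolution algebra, in such a way that a natural basis is (up to the unavoidable rescaling from Proposition \ref{prop:unique_basis}) canonically identified with a labelled copy of the vertex set, and edges are recorded as nonzero entries of the structure matrix. A naive attempt --- taking basis $\{b_v \mid v\in V\}$ and setting $b_v^2 = \sum_{w\sim v} b_w$ --- fails on two counts: the resulting matrix need not be regular, and, worse, automorphisms of evolution algebras are allowed to rescale basis vectors, so the scalars $\lambda_i$ in Proposition \ref{prop:unique_basis} would introduce spurious symmetries (or destroy genuine ones) unless the construction is rigid enough to pin them down. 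The remedy, which I expect to be the technical heart of the paper, is to enlarge the basis: alongside one ``vertex'' basis element $b_v$ per vertex, adjoin auxiliary basis elements --- for instance one per edge, plus possibly a few ``gadget'' vertices attached asymmetrically --- and choose the structure constants so that (a) the $b_v^2$ involve the edge-elements in a pattern that reconstructs $E$, (b) the squares of the auxiliary elements force every natural-basis automorphism to fix the relevant $\lambda_i$'s to be roots of unity or outright $1$, killing the rescaling freedom, and (c) the whole structure matrix is square of full rank, giving regularity and hence, via Proposition \ref{prop:unique_basis}, uniqueness of the natural basis up to permutation.

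Concretely, I would proceed as follows. \textbf{Step 1: Definition of $\X$ on objects.} Fix $\G=(V,E)$. Take the $\Bbbk$-vector space with basis indexed by $V \sqcup E$ (and any extra gadgetry needed); declare $b_v^2$ and $b_e^2$ by formulas linear in this basis, symmetric in the two endpoints of each edge, and arrange the coefficients so the structure matrix is regular --- e.g.\ by making the auxiliary part of the matrix block-triangular with invertible diagonal blocks. Verify $\X(\G)=\X(\G)^2$ directly from regularity of $M_B$, which is Definition \ref{def:regular}; this gives item (1). \textbf{Step 2: Definition of $\X$ on morphisms.} Given a graph morphism $f\colon\G_1\to\G_2$, define $\X(f)$ on basis elements by $b_v\mapsto b_{f(v)}$, $b_e\mapsto b_{f(e)}$ (extended appropriately on gadgets), and check it is an evolution-algebra morphism, i.e.\ compatible with the chosen structure constants; functoriality and faithfulness are then immediate from injectivity of $f$ on vertices.

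\textbf{Step 3: The automorphism group.} Here I would show $\Aut_{\E_\Bbbk}(\X(\G))\cong\Aut_\graphs(\G)$ by two inclusions. One direction is easy: every graph automorphism yields an algebra automorphism via Step 2, and the assignment is injective. For the converse, take $\phi\in\Aut_{\E_\Bbbk}(\X(\G))$. Since $\X(\G)$ is regular, $\phi$ sends the natural basis $B$ to another natural basis, so by Proposition \ref{prop:unique_basis} there are a permutation $\sigma$ of the index set $V\sqcup E$ (and gadgets) and scalars $\lambda_i$ with $\phi(b_i)=\lambda_i b_{\sigma(i)}$. Now comes the crux: one must show $\sigma$ preserves the partition into vertex-indices and edge-indices (arguing, say, that $b_v^2\neq 0$ while $b_e^2$ has a distinguishing property, or vice versa, using the gadgets to break any remaining ambiguity), that the induced permutation of $V$ respects adjacency (so it lies in $\Aut_\graphs(\G)$), and that the $\lambda_i$ are forced to equal $1$ (or at least that $\phi$ then coincides with the image of that graph automorphism). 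Comparing $\phi(b_i^2)=\lambda_i^2 b_{\sigma(i)}$ with $\sum \lambda_k\,\omega_{k i}\,b_{\sigma(k)}$ via the defining formulas should yield a system of equations in the $\lambda$'s whose only solution is the trivial one --- this is the step whose success hinges entirely on a clever enough choice of structure constants in Step 1, and I expect it to be the main obstacle. \textbf{Step 4:} Conclude that the two maps $\Aut_\graphs(\G)\to\Aut_{\E_\Bbbk}(\X(\G))$ and back are mutually inverse, establishing item (2), and hence the theorem.
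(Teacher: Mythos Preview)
Your proposal is correct and matches the paper's approach precisely: basis indexed by $V\sqcup E$, $\X(f)$ defined by permuting indices, and the automorphism computation via Proposition~\ref{prop:unique_basis} followed by comparing $\phi(b_i)^2$ with $\phi(b_i^2)$ to kill the scalars and recover a graph automorphism. The actual construction is simpler than you fear---no extra gadgets are needed: taking $b_v^2=b_v$ (idempotents, so $\lambda_v^2=\lambda_v$ forces $\lambda_v=1$ immediately) and $b_e^2=b_e+b_v+b_w$ for $e=\{v,w\}$ yields an upper-triangular structure matrix with $1$'s on the diagonal (hence regularity), distinguishes vertex- from edge-indices by the number of nonzero structure constants, and the edge relation then pins down $\lambda_e=1$ and $\tau(e)=\{\sigma(v),\sigma(w)\}$ in one stroke.
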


For convenience of the reader, the proof of Theorem \ref{thm:funtor} has been split in several lemmas. Henceforth, $\Bbbk$ is a fixed field.

We first define the functor $\X$ on objects:

\begin{definition}\label{def:funtor_objeto}
Given a simple graph $\G=(V,E)$, we define $\X(\G)$ to be the evolution algebra over $\Bbbk$, with natural basis $B=\{b_v:v\in V\}\cup\{b_e:e\in E\}$ and multiplication given by
\begin{itemize}
\item $b_v^2=b_v$ for all $v\in V$.
\item $b_e^2=b_e+\sum_{v\in e} b_v$ for all $e\in E$.
\end{itemize}
\end{definition}

We now prove that $\X$ actually gives rise to regular evolution algebras.

\begin{lemma}\label{lem:regular}
For any simple graph $\G$, the evolution algebra $\X(\G)$ is regular.
\end{lemma}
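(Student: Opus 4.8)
The plan is to show that $\X(\G) = \X(\G)^2$ by exhibiting every basis element as a twofold product. By Definition~\ref{def:regular}, it suffices to prove that the structure matrix $M_B$ relative to the natural basis $B = \{b_v : v \in V\} \cup \{b_e : e \in E\}$ has maximal rank; equivalently, that $X^2 = \X(\G)$, since the columns of $M_B$ span $X^2$.

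First I would write down the structure matrix explicitly. Order the basis so that the vertex elements $b_v$ come first, followed by the edge elements $b_e$. The relation $b_v^2 = b_v$ contributes a column that is the standard basis vector $e_v$. The relation $b_e^2 = b_e + \sum_{v \in e} b_v$ contributes a column that is $e_e + \sum_{v \in e} e_v$. Hence in block form
\[
M_B = \begin{pmatrix} I_{|V|} & N \\ 0 & I_{|E|} \end{pmatrix},
\]
where $I_{|V|}$ and $I_{|E|}$ are identity matrices and $N$ is the (vertex $\times$ edge) incidence-type matrix whose $(v,e)$ entry is $1$ if $v \in e$ and $0$ otherwise. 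This matrix is block upper triangular with identity blocks on the diagonal, so $\det M_B = 1 \neq 0$, and therefore $M_B$ is regular.

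Alternatively, and perhaps more transparently, I would argue directly that $X^2 = X$: each $b_v$ lies in $X^2$ because $b_v = b_v^2$, and then each $b_e$ lies in $X^2$ because $b_e = b_e^2 - \sum_{v\in e} b_v$, a difference of elements already known to be in $X^2$. Since $X^2$ contains every element of the natural basis $B$, it is all of $\X(\G)$, so $\X(\G)$ is regular by Definition~\ref{def:regular}.

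I do not expect any genuine obstacle here: the construction in Definition~\ref{def:funtor_objeto} was evidently rigged so that the structure matrix is unitriangular. The only point requiring a modicum of care is bookkeeping — making sure the ``$b_e$ appears in its own square'' terms are what place the $1$'s on the diagonal of the lower block, so that regularity does not depend on the characteristic of $\Bbbk$ (in particular, the argument works even in characteristic $2$, where the coefficients $1$ in $\sum_{v\in e} b_v$ are still nonzero). This uniformity in $\Bbbk$ is exactly what is needed for the ``over any given field'' claim in Theorem~\ref{thm:funtor}.
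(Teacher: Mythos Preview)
Your proposal is correct and takes essentially the same approach as the paper: you order the basis so that $M_B$ is upper triangular with $1$'s on the diagonal, whence $\det M_B = 1$ and $\X(\G)$ is regular. Your explicit block decomposition and the alternative $X^2=X$ argument are extra detail, but the core idea matches the paper's proof exactly.
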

\begin{proof}
Let $M_B$ be the structure matrix of $\X(\G)$ relative to the natural basis $B$ given in Definition \ref{def:funtor_objeto}. Then $M_B$ is upper triangular, and every input on the diagonal is equal to $1$. Therefore $\operatorname{det}(M_B)=1$ and $M_B$ is regular, hence so is $\X(\G)$.
\end{proof}

We now define the functor $\X$ on morphisms:

\begin{definition}\label{def:funtor_maps}
Given a graph morphism $f\colon \G_1\to \G_2$ where $\G_i=(V_i,E_i)$, $i=1,2$, define $\X(f)\colon \X(\G_1)\to \X(\G_2)$ to be the algebra morphism given by $\X(f)(b_v)=b_{f(v)}$ and $\X(f)(b_e)=b_{f(e)}$.
\end{definition}

\begin{remark}
The map $\X(f)$ above is a well defined algebra morphism. Indeed, if $f$ is a graph morphism, $f$ maps vertices to vertices, and edges to edges while preserving adjacency. Therefore for any  vertex $v\in V_1$, $f(v)\in V_2$ and
$$\X(f)(b_v)^2=b_{f(v)}^2=b_{f(v)}=\X(f)(b_v),$$
while for any edge $e=\{v,w\}\in E_1$, $f(e)=\{f(v),f(w)\}\in E_2$ and
$$\X(f)(b_e)^2=b_{f(e)}^2=b_{f(e)}+b_{f(v)}+b_{f(w)}=\X(f)(b_{e}+b_{v}+b_{w})=\X(f)(b_e^2).$$
Finally, it is straightforward to check that $\X(f)=\X(g)$ if and only if $f=g$, and $\X(f)\circ \X(g)=\X(f\circ g)$.
\end{remark}

The functor $\X$ is also faithful on objects.

\begin{lemma}\label{lem:fiel-objetos}
Let $\G_i=(V_i,E_i)$, $i=1,2$, be simple graphs. Then $\X(\G_1)\cong \X(\G_2)$ if and only if $\G_1\cong \G_2$.
\end{lemma}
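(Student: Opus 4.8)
The plan is to prove both implications. The easy direction is immediate: if $\G_1\cong\G_2$ via a graph isomorphism $f$, then $\X(f)$ is an algebra morphism, and since $f$ is invertible with graph-morphism inverse $f^{-1}$, functoriality gives $\X(f)\circ\X(f^{-1})=\X(\id)=\id$ and likewise on the other side, so $\X(f)$ is an algebra isomorphism; hence $\X(\G_1)\cong\X(\G_2)$. (Strictly one should note that the inverse of a bijective graph morphism is again a graph morphism, which is clear since $f$ and $f^{-1}$ both preserve and reflect adjacency when $f$ is a bijection on vertices that carries $E_1$ onto $E_2$.)

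For the harder converse, suppose $\varphi\colon\X(\G_1)\to\X(\G_2)$ is an algebra isomorphism. Since $\X(\G_1)$ is regular by Lemma \ref{lem:regular}, so is $\X(\G_2)$, and they have the same dimension, so $|V_1|+|E_1|=|V_2|+|E_2|$. The key tool is Proposition \ref{prop:unique_basis}: the image $\varphi(B_1)$ of the natural basis $B_1=\{b_v\}\cup\{b_e\}$ of $\X(\G_1)$ is a natural basis of $\X(\G_2)$, so by the uniqueness statement there is a permutation $\sigma$ of the index set $V_2\sqcup E_2$ and scalars $\lambda$ such that $\varphi$ sends each element of $B_1$ to a scalar multiple of an element of $B_2$. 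So I get a bijection between $B_1$ and $B_2$ together with scaling factors, and I must show this bijection matches vertices to vertices and edges to edges, and that the induced map $V_1\to V_2$ is a graph isomorphism.

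The main obstacle is distinguishing the "vertex" basis elements from the "edge" basis elements intrinsically, i.e. algebraically, so that $\varphi$ is forced to respect the partition of the basis. The natural invariant is the behaviour of $b^2$ for a basis element $b$: for a vertex, $b_v^2=b_v$, so $b_v$ is idempotent; for an edge, $b_e^2=b_e+b_v+b_w\neq\lambda b_e$, so $b_e$ is not a scalar multiple of an idempotent basis element — but one must be careful because $\varphi(b_v)=\lambda b_{x}$ for some $x\in V_2\sqcup E_2$, and $(\lambda b_x)^2=\lambda b_x$ forces $\lambda^2\omega$-relations that pin down $x\in V_2$ and $\lambda=1$ (using $b_x^2=b_x$ when $x$ is a vertex and $b_x^2=b_x+\cdots$ when $x$ is an edge, the latter being impossible since $b_x,b_v,b_w$ are linearly independent). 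So the plan is: (i) show $\varphi$ carries $\{b_v:v\in V_1\}$ bijectively onto $\{b_v:v\in V_2\}$ with all scalars equal to $1$, via the idempotency analysis; (ii) deduce $\varphi$ carries $\{b_e:e\in E_1\}$ onto $\{\lambda_e b_e:e\in E_2\}$; (iii) for an edge $e=\{v,w\}$ of $\G_1$, expand $\varphi(b_e^2)=\varphi(b_e)^2$: writing $\varphi(b_e)=\lambda b_{e'}$ and using $b_{e'}^2=b_{e'}+b_{v'}+b_{w'}$ where $e'=\{v',w'\}$, compare with $\varphi(b_e+b_v+b_w)=\lambda b_{e'}+b_{\alpha(v)}+b_{\alpha(w)}$ where $\alpha\colon V_1\to V_2$ is the bijection from (i); linear independence of the $b$'s forces $\lambda^2=\lambda$ (so $\lambda=1$) and $\{v',w'\}=\{\alpha(v),\alpha(w)\}$, i.e. $\alpha(e)=e'\in E_2$. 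This shows $\alpha$ is a graph morphism, and running the same argument for $\varphi^{-1}$ shows $\alpha^{-1}$ is too, so $\alpha\colon\G_1\to\G_2$ is a graph isomorphism, completing the proof.
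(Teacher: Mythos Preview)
Your proposal is correct and follows essentially the same approach as the paper: use Proposition~\ref{prop:unique_basis} to see that $\varphi$ sends basis elements to scalar multiples of basis elements, then compare $\varphi(b^2)$ with $\varphi(b)^2$ to pin down the scalars and show adjacency is preserved. The only minor difference is in how vertex basis elements are distinguished from edge basis elements: the paper invokes that the number of nonzero structure constants of each basis element is preserved (one for vertices, three for edges), whereas you argue directly via idempotency; these are equivalent observations, and your version is arguably more explicit.
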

\begin{proof}
If $\G_1\cong \G_2$, functoriality provides the algebra isomorphism $\X(\G_1)\cong \X(\G_2)$.

Assume now that $\psi\colon \X(\G_1)\cong \X(\G_2)$ is an isomorphism of regular evolution algebras. Let $B_i$ be the natural basis of $\X(\G_i)$, $i=1,2$, given in Definition \ref{def:funtor_objeto}. Then $\psi(B_1)$ is a natural basis of $\X(\G_2)$, and according to Proposition \ref{prop:unique_basis}, there exist scalars $\lambda_v,\lambda_e\in\Bbbk$, for $v\in V_1$ and $e\in E_1$, such that $\lambda_v\psi(b_v),\lambda_e\psi(b_e)\in B_2$. Moreover, the number of non trivial structure constants must be preserved for each element in the basis, hence for $v\in V_1$ and $e\in E_1$ there exist $\sigma(v)\in V_2$ and $\tau(e)\in E_2$ such that $\lambda_v\psi(b_v)=b_{\sigma(v)}$ and $\lambda_e\psi(b_e)=b_{\tau(e)}$.

Comparing
\begin{align*}
b_{\sigma(v)}^2&=b_{\sigma(v)},\text{ and}\\
\big(\lambda_v\psi(b_v)\big)^2&=\lambda_v^2\psi(b_v)^2=\lambda_v^2\psi(b_v^2)=\lambda_v^2\psi(b_v)=\lambda_v\big(\lambda_v\psi(b_v)\big)\\
&=\lambda_vb_{\sigma(v)},
\end{align*}
we obtain that $\lambda_v=1$ for all $v\in V_1$.

Now,  if $e=\{v,w\}\in E_1$ and $\tau(e)=\{\bar{v},\bar{w}\}\in E_2$, comparing
\begin{align*}
b_{\tau(e)}^2&=b_{\tau(e)}+b_{\bar{v}}+b_{\bar{w}},\text{ and}\\
\big(\lambda_e\psi(b_e)\big)^2&=\lambda_e^2\psi(b_e^2)=\lambda_e^2\psi(b_e+b_v+b_w)=\lambda_e^2\big(\psi(b_e)+\psi(b_v)+\psi(b_w)\big)\\
&=\lambda_eb_{\tau(e)}+\lambda_e^2\big(b_{\sigma(v)}+b_{\sigma(w)}\big),
\end{align*}
we get that $\lambda_e=1$ and $\tau(e)=\{\sigma(v),\sigma(w)\}$.

 In other words, the set morphism $\sigma\colon V_1\to V_2$, which has to be bijective since $\psi$ is an isomorphism, maps edges in $E_1$ to edges in $E_2$. That is $\sigma\colon\G_1\cong \G_2$ and moreover $\psi=\X(\sigma)$ as in Definition \ref{def:funtor_maps}.
\end{proof}

Finally, we check that $\X$ is full on the automorphism group of any object in the image. The proof follows similar arguments to those in the previous one:

\begin{lemma}\label{lem:aut-full}
Given a simple graph $\G=(V,E)$, and $g\in\Aut_{\E_\Bbbk}\big(\X(\G)\big)$, there exists  $f\in \Aut_\graphs(\G)$ such that $g=\X(f)$.
\end{lemma}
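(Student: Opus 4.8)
The plan is to mimic closely the argument already used in the proof of Lemma \ref{lem:fiel-objetos}, but now with $\G_1=\G_2=\G$ and $\psi=g$ an automorphism. Given $g\in\Aut_{\E_\Bbbk}\big(\X(\G)\big)$, the image $g(B)$ of the natural basis $B$ from Definition \ref{def:funtor_objeto} is again a natural basis of the regular evolution algebra $\X(\G)$; hence Proposition \ref{prop:unique_basis} applies and yields a permutation of $B$ together with nonzero scalars. As in the previous lemma, the combinatorial invariant ``number of nonzero structure constants'' (which is $1$ for each $b_v$ and $3$ for each $b_e$) is preserved by any rescaling-permutation, so $g$ cannot mix the $b_v$'s with the $b_e$'s: there are maps $f\colon V\to V$ and $\tau\colon E\to E$ and scalars $\lambda_v,\lambda_e\in\Bbbk^\times$ with $\lambda_v g(b_v)=b_{f(v)}$ and $\lambda_e g(b_e)=b_{\tau(e)}$.

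Next I would pin down the scalars and the compatibility of $f$ with $\tau$ exactly as before. Squaring the identity $\lambda_v g(b_v)=b_{f(v)}$ and using $g(b_v^2)=g(b_v)$ forces $\lambda_v=1$ for every $v\in V$. Then, writing $e=\{v,w\}$ and $\tau(e)=\{\bar v,\bar w\}$, squaring $\lambda_e g(b_e)=b_{\tau(e)}$ and comparing with $b_{\tau(e)}^2=b_{\tau(e)}+b_{\bar v}+b_{\bar w}$, while substituting $g(b_v)=b_{f(v)}$, $g(b_w)=b_{f(w)}$, gives $\lambda_e=1$ and $\{\bar v,\bar w\}=\{f(v),f(w)\}$. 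Thus $\tau(e)=\{f(v),f(w)\}$ for all $e=\{v,w\}\in E$, so $f$ sends edges to edges. Since $g$ is an automorphism, $g$ is bijective; as $f$ is the induced permutation on the basis vectors $\{b_v\}$, $f$ is a bijection $V\to V$ that preserves $E$, i.e.\ $f\in\Aut_\graphs(\G)$. Finally, by construction $g(b_v)=b_{f(v)}$ and $g(b_e)=b_{\tau(e)}=b_{f(e)}$, which is precisely the defining formula for $\X(f)$ in Definition \ref{def:funtor_maps}, so $g=\X(f)$.

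Together with the remark that $\X(f)=\X(g)$ iff $f=g$, this lemma shows that $\X$ restricts to a group isomorphism $\Aut_\graphs(\G)\xrightarrow{\ \cong\ }\Aut_{\E_\Bbbk}\big(\X(\G)\big)$, completing part (2) of Theorem \ref{thm:funtor}; part (1) is Lemma \ref{lem:regular} and faithfulness is the remark after Definition \ref{def:funtor_maps} together with Lemma \ref{lem:fiel-objetos}. I do not expect a serious obstacle here: the only point requiring a little care is the step that separates vertex-basis elements from edge-basis elements, which relies on the structure-constant count being an isomorphism invariant (equivalently, that a natural basis of a regular evolution algebra is unique up to permutation and scaling, Proposition \ref{prop:unique_basis}), and on the fact that in characteristic $2$ the count for $b_e$ is still $3$ because the loop-free, simple nature of $\G$ guarantees the two summands $b_v,b_w$ are distinct from each other and from $b_e$. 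Everything else is the same bookkeeping already carried out in Lemma \ref{lem:fiel-objetos}.
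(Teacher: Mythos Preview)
Your proposal is correct and follows essentially the same argument as the paper's own proof: invoke Proposition~\ref{prop:unique_basis} to write $g$ as a rescaled permutation of $B$, use the count of nonzero structure constants to separate vertex-type from edge-type basis elements, then square to force all scalars equal to $1$ and to identify $\tau(e)=\{f(v),f(w)\}$, concluding $g=\X(f)$ with $f\in\Aut_\graphs(\G)$. The only cosmetic difference is that you write $\lambda_v g(b_v)=b_{f(v)}$ where the paper writes $g(b_v)=\lambda(v)b_{\sigma(v)}$, but this is just a relabelling of the scalars.
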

\begin{proof}
Let $g\in\Aut_{\E_\Bbbk}\big(\X(\G)\big)$. Recall that $\X(\G)$ is regular, and therefore $B$, the natural basis of $\X(\G)$ given in Definition \ref{def:funtor_objeto}, is unique up to permutation and scalar multiplication by Proposition \ref{prop:unique_basis}. Since  $g(B)$ is another natural basis of $\X(\G)$, and $g$ has to preserve the structure constants, then $g(b_v)=\lambda(v) b_{\sigma(v)}$, and $g(b_e)=\lambda(e) b_{\tau(e)}$ for some permutations $\sigma\in\text{Perm}(V)$, and $\tau\in\text{Perm}(E)$, and scalars $\lambda(v),\lambda(e)\in\Bbbk$.

Comparing
\begin{align*}
g(b_v^2)&=g(b_v)=\lambda(v)b_{\sigma(v)},\text{ and}\\
\big(g(b_v)\big)^2&=(\lambda(v)b_{\sigma(v)})^2=\lambda(v)^2b_{\sigma(v)},
\end{align*}
we obtain $\lambda(v)^2=\lambda(v)$ and therefore $\lambda(v)=1$.

If $e=\{v,w\}\in E$, and $\tau(e)=\{\bar{v},\bar{w}\}$, comparing
\begin{align*}
g(b_e^2)&=g(b_e+b_v+b_w)=\lambda(e)b_{\tau(e)}+\lambda(v)b_{\sigma(v)}+\lambda(w)b_{\sigma(w)} ,\text{ and}\\
\big(g(b_e)\big)^2&=(\lambda(e)b_{\tau(v)})^2=\lambda(e)^2\big(b_{\tau(e)}+b_{\bar{v}}+b_{\bar{w}}\big),
\end{align*}
we obtain $\lambda(e)^2=\lambda(e)$, thus $\lambda(e)=1$, and $\tau(e)=\{\sigma(v),\sigma(w)\}$. Therefore $\sigma$ is a permutation of the vertices of $\G$ that maps edges to edges. In other words, $\sigma\in \Aut_\graphs(\G)$ and $g=\X(\sigma)$ as in Definition \ref{def:funtor_maps}.
\end{proof}


\bibliographystyle{abbrv}
\bibliography{references}

\begin{thebibliography}{10}

\bibitem{Babai}
L.~Babai.
\newblock Automorphism groups, isomorphism, reconstruction.
\newblock In {\em Handbook of combinatorics, {V}ol. 1, 2}, pages 1447--1540.
  Elsevier Sci. B. V., Amsterdam, 1995.

\bibitem{Tesis-Yolanda}
Y.~Cabrera~Casado.
\newblock {\em Evolution algebras}.
\newblock PhD thesis, University of M{\'a}laga, Spain, Dec. 2016.

\bibitem{Cabrera-Siles-Velasco}
Y.~Cabrera~Casado, M.~Siles~Molina, and M.~V. Velasco.
\newblock Evolution algebras of arbitrary dimension and their decompositions.
\newblock {\em Linear Algebra Appl.}, 495:122--162, 2016.

\bibitem{Camacho-Gomez}
L.~M. Camacho, J.~R. G{\'o}mez, B.~A. Omirov, and R.~M. Turdibaev.
\newblock Some properties of evolution algebras.
\newblock {\em Bull. Korean Math. Soc.}, 5:1481--1494, 2013.

\bibitem{Casas-Ladra}
J.~M. Casas, M.~Ladra, B.~A. Omirov, and U.~A. Rozikov.
\newblock On nilpotent index and dibaricity of evolution algebras.
\newblock {\em Linear Algebra Appl.}, 43 (9):90--105, 2013.

\bibitem{Celorrio-Velasco}
M.~E. Celorrio and M.~V. Velasco.
\newblock Classifying evolution algebras of dimensions two and three.
\newblock {\em Mathematics}, 7(12):1236, Dec 2019.

\bibitem{CMV1}
C.~Costoya, D.~M\'{e}ndez, and A.~Viruel.
\newblock Homotopically rigid {S}ullivan algebras and their applications.
\newblock In {\em An alpine bouquet of algebraic topology}, volume 708 of {\em
  Contemp. Math.}, pages 103--121. Amer. Math. Soc., Providence, RI, 2018.

\bibitem{cmv3}
C.~Costoya, D.~M{\'e}ndez, and A.~Viruel.
\newblock Realisability problem in arrow categories.
\newblock {\em Collectanea Mathematica}, Sep 2019.

\bibitem{CV2}
C.~Costoya and A.~Viruel.
\newblock Every finite group is the group of self-homotopy equivalences of an
  elliptic space.
\newblock {\em Acta Math.}, 213(1):49--62, 2014.

\bibitem{CV-advances}
C.~Costoya and A.~Viruel.
\newblock On the realizability of group actions.
\newblock {\em Adv. Math.}, 336:299--315, 2018.

\bibitem{deGroot}
J.~de~Groot.
\newblock Groups represented by homeomorphism groups.
\newblock {\em Math. Ann.}, 138:80--102, 1959.

\bibitem{Elduque-Labra-2015}
A.~Elduque and A.~Labra.
\newblock Evolution algebras and graphs.
\newblock {\em J. Algebra Appl.}, 14(7):1550103, 10, 2015.

\bibitem{Frucht39}
R.~Frucht.
\newblock Herstellung von {G}raphen mit vorgegebener abstrakter {G}ruppe.
\newblock {\em Compositio Math.}, 6:239--250, 1939.

\bibitem{MR2031860}
N.~L. Gordeev and V.~L. Popov.
\newblock Automorphism groups of finite dimensional simple algebras.
\newblock {\em Ann. of Math. (2)}, 158(3):1041--1065, 2003.

\bibitem{Jones}
G.~A. Jones.
\newblock Realisation of groups as automorphism groups in permutational
  categories.
\newblock {\em Ars Math. Contemp.}, 21(1):Paper No. 1.01, 22, 2021.

\bibitem{Rozikov-Murodov}
U.~A. Rozikov and S.~N. Murodov.
\newblock Dynamics of two-dimensional evolution algebras.
\newblock {\em Lobachevskii J. Math.}, 3 (4):344--358, 2013.

\bibitem{Sabidussi}
G.~Sabidussi.
\newblock Graphs with given infinite group.
\newblock {\em Monatsh. Math.}, 64:64--67, 1960.

\bibitem{Tian}
J.~P. Tian.
\newblock {\em Evolution algebras and their applications}, volume 1921 of {\em
  Lecture Notes in Mathematics}.
\newblock Springer, Berlin, 2008.

\bibitem{Tian-Vojtechovsky}
J.~P. Tian and P.~Vojt\v{e}chovsk\'{y}.
\newblock Mathematical concepts of evolution algebras in non-{M}endelian
  genetics.
\newblock {\em Quasigroups Related Systems}, 14(1):111--122, 2006.

\bibitem{Tian-Zou}
J.~P. Tian and Y.~M. Zou.
\newblock Finitely generated nil but not nilpotent evolution algebras.
\newblock {\em J. Algebra Appl.}, 13 (1), 2014.

\end{thebibliography}
\end{document}